\theoremstyle{plain}
\newtheorem{theorem}{Theorem}[section]
\theoremstyle{definition}
\newtheorem{remark}[theorem]{Remark}
\theoremstyle{remark}
 \numberwithin{equation}{section} 
\begin{document}
\title{The Blow up Rate Estimates for a Reaction Diffusion System with Gradient Terms}

\author{Maan A. Rasheed and Miroslav Chlebik}
%\date{}
\maketitle

\abstract 
We concider, the blow-up solutions for a coupled reaction diffusion system with gradient terms. The main purpose is to understand whether the gradient terms effect the blow-up properties. We derive the upper and lower blow-up rate estimates under certain assumptions. 

\section{\bf Introduction}
In this section, we consider the Cauchy (Dirichlet) parabolic problem: 
\begin{equation}\label{f1}\left. \begin{array}{lll}
u_t= \Delta u+|\nabla u|^{q_1} + v^{p_1},& v_t= \Delta v+|\nabla v|^{q_2} + u^{p_2}&\mbox{in}\quad \Omega\times (0,T),\\
u(x,0)=u_0(x),& v(x,0)=v_0(x),& \mbox{in}\quad \Omega,
\end{array} \right\}  \end{equation}
where $p_1,p_2,\in (1,\infty), q_1,q_2\in(1,2],$ $u_0,v_0\ge 0$ are nonzero smooth bounded functions on $\Omega$ (not necessarily radial), $\Omega=R^n$ or $B_R.$ Moreover, in case of $\Omega=B_R,$ $u,v$ are further required to satisfy the condition:
 \begin{equation}\label{ m}\begin{array}{lll}  u(x,t)=0,&\quad v(x,t)=0, &\mbox{on} \quad\partial \Omega  \times [0,T).\end{array} \end{equation}

  The problems of  semilinear parabolic equations have been studied by many authors, for instance, consider the Cauchy (Dirichlet) problem of the semilinear heat equation:
\begin{equation}\label{f3}u_t= \Delta u + u^p,  \quad \mbox{in}\quad  \Omega\times (0,T),\end{equation}
where  $p>1,$ $\Omega=R^n$ or $B_R.$ It is well known that every positive solution blows up in finite time, if the initial data is nonnegative and suitably large \cite{32,39}.
Moreover, it was proved in \cite{1,9} that the blow-up rate estimate for (\ref{f3}) takes the following form
 $$u(x,t) \le c(T-t)^{-\frac{1}{p-1}}, \quad (x,t)\in \Omega\times (0,T).$$

Later, in \cite{33} it has been shown that if we add a positive gradient term to the equation (\ref{f3}), namely 
\begin{equation}\label{f4}
u_t= \Delta u+|\nabla u|^q + u^p,
\end{equation}
 then that enhancing blow-up, and the influence of the gradient term becoming more important as the value of $p$ decreases. In the case  $q=2$ for radial positive solutions in $R^n,$ it was shown in \cite{34,35} that blow-up solutions behave asymptotically like the self-similar solution of the \emph{Hamilton-Jacobi equation} without diffusion 
($u_t=|\nabla u|^2 + u^p$),
which takes the form $$u(x,t)=(T-t)^{\frac{-1}{p-1}}w(\frac{x}{(T-t)^m}),\quad m=(2-p)/2(p-1),$$
where $w \in C^2(R^n)$ is a positive radial decreasing function.
On the other hand, the existence of nonnegative global solutions  is shown in \cite{Awos} for small initial data.

 In \cite{36,A}, it was considered, the Cauchy (Dirichlet) problem of the following semilinear system:
\begin{equation}\label{f5}
u_t= \Delta u + v^{p_1},  \quad v_t= \Delta v + u^{p_2}, \quad (x,t)\in \Omega \times (0,T),
\end{equation}
where $p_1,p_2>1,$ $\Omega=B_R$ or $R^n,$  with nonzero initial data $u_0,v_0\ge0,$ it was shown that any positive solution of this problem blows up in finite time if the initial data are large enough. Moreover, for the Cauchy problem of (\ref{f5}), it is well known \cite{36} that any nontrival positive solution blows up in finite time, if\begin{equation}\label{Zax} \max\{\alpha,\beta\}\ge \frac{n}{2},\end{equation} where
\begin{equation}\label{f6}
\alpha=\frac{p_1+1}{p_1p_2-1} ,\quad \beta=\frac{p_2+1}{p_1p_2-1}.
\end{equation}
The blow-up rate estimates of this system was studied in \cite{27,7}, it was proved that there exist a positive constant $C$ such that
$$u(x,t)\le C(T-t)^{-\alpha},\quad (x,t)\in \Omega\times (0,T),$$
$$v(x,t)\le C(T-t)^{-\beta},\quad (x,t)\in \Omega\times (0,T).$$
 
 In this paper, for problem (\ref{f1}), under some restricted assumptions,  we prove that the upper blow-up rate estimates of the positive solutions and their gradient terms, take the following forms:
\begin{align*}
u(x,t)+|\nabla u(x,t)|^{\frac{2(p_1+1)}{p_1p_2+2p_1+1}}\le C_1(T-t)^{-\alpha},\quad (x,t)\in\Omega\times(0,T),\\
 v(x,t)+|\nabla v(x,t)|^{\frac{2(p_2+1)}{p_1p_2+2p_2+1}}\le C_2(T-t)^{-\beta},\quad (x,t)\in\Omega\times(0,T),\end{align*}
 where $C_1,C_2>0.$

\section{Preliminaries}
Set $$F_1(v,\nabla u)=|\nabla u|^{q_1} + v^{p_1},\quad F_2(u,\nabla v)=|\nabla v|^{q_2} + u^{p_2}.$$
Since the system (\ref{f1}) is uniformly parabolic and $F_1,F_2$ are $C^{1}([0,\infty) \times R^n),$ moreover, the growth of the nonlinearities $F_1$ and $F_1$ with respect to the gradient is sub-quadratic, it follows that, the local existence of the unique nonnegative classical solutions to the Dirichlet (Cauchy) problem of (\ref{f1}) is guaranteed, for  smooth and bounded initial data $u_0,v_0,$ by the standard parabolic theory \cite{37} (see also \cite{JH}). On the other hand, the positive solutions of problem  (\ref{f1}) may blow up in finite time, and that due to the known blow-up results of the system (\ref{f5}) and the maximum principle \cite{2}.
\begin{remark}
Since the growth of the nonlinear terms in problem (\ref{f1}) with respect to the gradients is sub-quadratic, the gradient functions $\nabla u,\nabla v$ are bounded as long as the solution $(u,v)$ is bounded (see \cite{JH}).
\end{remark}

\subsection{Blow-up Rate Estimates}
In the next theorem, we establish the upper blow-up rate estimates for the problem (\ref{f1}). Furthermore, without comparing the blow-up solutions of this problem with those of problem (\ref{f5}), we show that the blow-up can only occur simultaneously. 
\begin{theorem}\label{f}
If $p_1,p_2,q_1$ and $q_2$ satisfy the following conditions
\begin{enumerate}[\rm(1)]
\item $\max\{\alpha, \beta\} \ge \frac{n}{2},$
\item $1< q_1< \frac{2\alpha+2}{2\alpha+1},\quad 1<q_2< \frac{2\beta+2}{2\beta+1},$\end{enumerate}
 where 
 $\alpha,\beta$ are given in (\ref{f6}), then for any positive blow-up solution $(u,v)$ of problem (\ref{f1}) there exist positive constants $C_1,C_2$ such that
\begin{equation}\label{Ram1}u(x,t)+|\nabla u(x,t)|^{\frac{2(p_1+1)}{p_1p_2+2p_1+1}}\le C_1(T-t)^{-\alpha},\end{equation}
  \begin{equation}\label{Ram2}v(x,t)+|\nabla v(x,t)|^{\frac{2(p_2+1)}{p_1p_2+2p_2+1}}\le C_2(T-t)^{-\beta},\end{equation}  in $\Omega\times(0,T)$, where $T<\infty$ is the blow-up time.  \end{theorem}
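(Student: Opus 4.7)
The strategy is a Giga--Kohn type rescaling argument adapted to the coupled system with sub-quadratic gradient perturbations. The exponents $\gamma_1 := 2(p_1+1)/(p_1p_2+2p_1+1) = 2\alpha/(2\alpha+1)$ and $\gamma_2 := 2(p_2+1)/(p_1p_2+2p_2+1) = 2\beta/(2\beta+1)$ are chosen precisely so that, under the natural parabolic self-similar scaling $u\mapsto \lambda^{2\alpha}u(\lambda y,\lambda^2 s)$ and $v\mapsto \lambda^{2\beta}v(\lambda y,\lambda^2 s)$ of the model system (\ref{f5}), the quantities $u$ and $|\nabla u|^{\gamma_1}$ (resp.\ $v$ and $|\nabla v|^{\gamma_2}$) scale with the same factor $\lambda^{2\alpha}$ (resp.\ $\lambda^{2\beta}$), matching the self-similar blow-up rate $(T-t)^{-\alpha}$ (resp.\ $(T-t)^{-\beta}$).

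I would argue by contradiction. Suppose (\ref{Ram1}) fails; then there is a sequence $(x_k,t_k)\in\Omega\times(0,T)$ with $t_k\to T^-$ such that
$M_k:= u(x_k,t_k)+|\nabla u(x_k,t_k)|^{\gamma_1}$ satisfies $(T-t_k)^{\alpha}M_k\to\infty$. Set $\lambda_k:=M_k^{-1/(2\alpha)}\to 0$ and define
\begin{equation*}
\tilde u_k(y,s):=\lambda_k^{2\alpha}u(x_k+\lambda_k y,\,t_k+\lambda_k^2 s),\qquad \tilde v_k(y,s):=\lambda_k^{2\beta}v(x_k+\lambda_k y,\,t_k+\lambda_k^2 s).
\end{equation*}
Using $\alpha+1=\beta p_1$ and $\beta+1=\alpha p_2$, the rescaled pair solves
\begin{equation*}
\partial_s\tilde u_k=\Delta\tilde u_k+\lambda_k^{a_1}|\nabla\tilde u_k|^{q_1}+\tilde v_k^{p_1},\qquad \partial_s\tilde v_k=\Delta\tilde v_k+\lambda_k^{a_2}|\nabla\tilde v_k|^{q_2}+\tilde u_k^{p_2},
\end{equation*}
with $a_1=2\alpha+2-(2\alpha+1)q_1>0$ and $a_2=2\beta+2-(2\beta+1)q_2>0$ by hypothesis (2); hence the gradient coefficients vanish as $k\to\infty$. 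By construction $\tilde u_k(0,0)+|\nabla\tilde u_k(0,0)|^{\gamma_1}=1$, and the time intervals $(-t_k/\lambda_k^2,(T-t_k)/\lambda_k^2)$ expand, with the right endpoint tending to $+\infty$ by the contradiction hypothesis.

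Next, I would use the monotone maximum functions $\sup_{\Omega\times[0,t]}u$ and $\sup_{\Omega\times[0,t]}v$ together with Remark 2.1, which bounds $|\nabla u|,|\nabla v|$ in terms of $\|u\|_\infty,\|v\|_\infty$, to derive uniform $L^\infty$ bounds for $\tilde u_k,\tilde v_k$ on bounded backward cylinders. Standard interior parabolic Schauder estimates then give $C^{2+\mu,1+\mu/2}_{\mathrm{loc}}$ compactness, so a subsequence converges to a nonnegative classical limit $(\tilde u,\tilde v)$ on $\mathbb{R}^n\times(-\infty,S)$ for some $S\ge 0$; the limit solves the pure reaction system $\tilde u_s=\Delta\tilde u+\tilde v^{p_1}$, $\tilde v_s=\Delta\tilde v+\tilde u^{p_2}$ with the normalization $\tilde u(0,0)+|\nabla\tilde u(0,0)|^{\gamma_1}=1$, hence is nontrivial and bounded. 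Hypothesis (1), $\max\{\alpha,\beta\}\ge n/2$, together with the Fujita/Liouville-type nonexistence result for (\ref{f5}) cited in \cite{36}, rules out any such global bounded nontrivial nonnegative solution, producing the desired contradiction. Estimate (\ref{Ram2}) follows symmetrically.

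For simultaneous blow-up, if, say, $v$ stayed bounded on $\Omega\times[0,T)$, then $u$ would satisfy $u_t\le\Delta u+|\nabla u|^{q_1}+K^{p_1}$ for some constant $K$; since $q_1<2$, a standard comparison with a spatially constant ODE super-solution yields a bound on $u$ up to $T$, contradicting blow-up, and symmetrically if $u$ stays bounded. The principal technical obstacle is the Liouville step: one needs nonexistence of bounded nontrivial nonnegative global solutions of the pure reaction system on the half-space $\mathbb{R}^n\times(-\infty,S)$ in the regime $\max\{\alpha,\beta\}\ge n/2$, and in the Dirichlet setting one must further verify that the concentration points $(x_k,t_k)$ remain at a positive distance from $\partial\Omega$ on the $\lambda_k$ scale, so that the limit indeed lives on all of $\mathbb{R}^n$; this uses interior gradient estimates for sub-quadratic Hamilton--Jacobi equations (see \cite{JH}).
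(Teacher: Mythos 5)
Your scaling ansatz, the choice of exponents, and the observation that hypothesis (2) makes the rescaled gradient coefficients $\lambda_k^{a_i}$ vanish all match the paper. But the argument has a genuine gap at exactly the point you flag as ``the principal technical obstacle'': your rescaling limit is only controlled \emph{backward} in time. The running suprema give $\tilde u_k\le M_u(t_k)/M_k$ only on cylinders with $s\le 0$ (and even this requires choosing $(x_k,t_k)$ as near-maximizers of $\sup_{\Omega\times(0,t_k]}\bigl[u+|\nabla u|^{2(p_1+1)/(p_1p_2+2p_1+1)}\bigr]$, not merely as points where the desired estimate fails); for $s>0$ you have no bound at all, since $M_u(t_k+\lambda_k^2 s)$ is not controlled by $M_k$. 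Hence your limit is a bounded nontrivial \emph{ancient} solution on $\mathbb{R}^n\times(-\infty,0]$, and the Fujita-type theorem of Escobedo--Herrero \cite{36} --- which asserts finite-time blow-up \emph{forward} in time for the Cauchy problem --- does not exclude such objects. The paper closes this gap with a doubling-time argument (its Step 4): defining $t_0^+$ as the first time $M_u$ doubles, it shows $\gamma^{-2}(t_0)(t_0^+-t_0)\le A$; were this false, the rescaled solutions would be bounded by $2$ on forward time intervals of length tending to infinity, so the limit would be a bounded nontrivial solution on all of $\mathbb{R}^n\times\mathbb{R}$, which \emph{is} ruled out by \cite{36}. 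The rate estimate then follows by summing the geometric series of doubling times, not by your direct contradiction.

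There is a second gap of the same kind for the $v$-component: rescaling about a point selected from $u$, the bound you can extract for $\tilde v_k$ on backward cylinders is $M_k^{-\beta/\alpha}M_v(t_k)$, which is not a priori bounded. One needs the two-sided comparability $\delta\le M_u^{-1/(2\alpha)}(t)\,M_v^{1/(2\beta)}(t)\le\delta^{-1}$ near $T$ --- the paper's estimate (\ref{f7}), proved by a separate rescaling/contradiction step in which $\tilde v_k\to 0$ forces the limit $\tilde u\equiv 0$ through the second equation, contradicting the normalization at the origin. That estimate is also what yields simultaneous blow-up (replacing your comparison-principle sketch) and what transfers the rate from $M_u$ to $M_v$. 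Without (\ref{f7}) and the doubling step, the compactness and Liouville portions of your outline do not go through as stated.
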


\begin{proof}
Let $$M_u(t)=\sup_{\Omega \times (0,t]}[u(x,t)+|\nabla u(x,t)| ^{\frac{2(p_1+1)}{p_1p_2+2p_1+1}}],$$ $$M_v(t)=\sup_{\Omega \times (0,t]}[v(x,t)+|\nabla v(x,t)|^{\frac{2(p_2+1)}{p_1p_2+2p_2+1}}],$$ for $t \in(0,T).$ 

Clearly, $M_u,M_v$ are positive, continuous and nondecreasing functions on $(0,T).$  At least one of them diverges as $t\rightarrow T,$ due to $(u,v)$ blows up at time $T.$

We show later that there is $\delta \in (0,1)$ such that
\begin{equation}\label{f7}
\delta\le M_u^{-\frac{1}{2\alpha}}(t)M_v^{\frac{1}{2\beta}}(t) \le \frac{1}{\delta},\quad t \in (T/2,T).
\end{equation}
So that, consequently, both $M_u,M_v$ have to diverge as $t\rightarrow T.$ 

To establish the blow-up rate estimates, we use a scaling argument similar as in \cite{27}.The proof is divided into several steps.

\hspace{-0.5 cm}{\bf Step 1: Scaling }

If $M_u$ diverges as $t\rightarrow T,$ the following procedure can be applied.

Given $t_0 \in (0,T)$, choose $(x^*,t^*)\in \Omega \times(0,t_0]$ such that
\begin{equation}\label{f8}
u(x^*,t^*)+|\nabla u(x^*,t^*)|^{\frac{2(p_1+1)}{p_1p_2+2p_1+1}} \ge \frac{1}{2} M_u(t_0).
\end{equation}
Let $\gamma=\gamma(t_0)=M_u^{-\frac{1}{2\alpha}}(t_0)$ be a scaling factor. Define the rescaled functions
\begin{equation}\label{f10}
\varphi_1^{\gamma}(y,s)=\gamma^{2\alpha}u(\gamma y+x^*,\gamma^2s+t^*),
\end{equation}
\begin{equation}\label{f11}
\varphi_2^{\gamma}(y,s)=\gamma^{2\beta}v(\gamma y+x^*,\gamma^2s+t^*),
\end{equation}
for $(y,s) \in\Omega_\gamma \times (-\gamma^{-2}t^*,\gamma^{-2}(T-t^*)),$ where $$\Omega_\gamma=\{y \in R^n :\gamma y+x^* \in \Omega\}.$$ 
Clearly, $$\Omega_\gamma := \left\{ \begin{array}{lll} R^n &\quad \mbox{if}&  \Omega=R^n,\\ B_{\frac{R}{\gamma}} &\quad \mbox{if}& \Omega=B_R. \end{array} \right.$$
Next, we aim to show that $(\varphi_1^{\gamma},\varphi_2^{\gamma})$ is a solution of the following system
\begin{equation}\label{f12} \left.\begin{array}{ll}
\varphi_{1s}^{\gamma}-\Delta\varphi_1^{\gamma}=\gamma^{\mu_1}|\nabla \varphi_1^{\gamma}|^{q_1}+(\varphi_2^{\gamma })^{p_1},\\
 \varphi_{2s}^{\gamma}-\Delta\varphi_2^{\gamma}=\gamma^{\mu_2}|\nabla\varphi_2^{\gamma}|^{q_2}+(\varphi_1^{\gamma })^{p_2},
\end{array}\right\} \end{equation}
where $\mu_1=2\alpha +2 -(2\alpha +1)q_1,~\mu_2=2\beta +2 -(2\beta +1)q_2.$ 

From the assumption (2), it follows that $\mu_1,\mu_2>0.$ 

Clearly,
\begin{equation}\label{f13}
\varphi _{1s}^{\gamma}=\gamma ^{2\alpha +2}u, \quad \nabla\varphi_1^{\gamma} = \gamma^{2\alpha +1}\nabla u, \quad \Delta \varphi_1^{\gamma} =\gamma^{2\alpha +2} \Delta u.
\end{equation}
From (\ref{f1}), (\ref{f13}), it follows
$$\frac{1}{\gamma^{(2\alpha +2)}}\varphi _{1s}^{\gamma}=\frac{1}{\gamma^{(2\alpha +2)}}\Delta \varphi_1^{\gamma}+ \frac{1}{\gamma^{q_1(2\alpha +1)}}|\nabla\varphi_1^{\gamma}|^{q_1}+\frac{1}{\gamma^{2p_1\beta} }(\varphi_2^{\gamma})^{p_1}.$$
 Multiply the last equation by ${\gamma^{(2\alpha +2)}},$ we get the first equation of (\ref{f12}). In the same way we can show that $\varphi_2^{\gamma}$ satisfies the second equation in system (\ref{f12}).
  
 Restrict $s$ to $s\in (-\gamma^{-2}t^*,0],$ our aim now is to show that
\begin{equation}\label{f15}
\varphi_1^{\gamma}(y,s)+|\nabla\varphi_1^{\gamma}(y,s)|^{\frac{2(p_1+1)}{p_1p_2+2p_1+1}}\le 1,
\end{equation}
for $(y,s) \in \Omega_\gamma \times (-\gamma^{-2}t^*,0].$

From (\ref{f13}), we obtain \begin{eqnarray}|\nabla \varphi_1^{\gamma}(y,s)|^{\frac{2(p_1+1)}{p_1p_2+2p_1+1}}&=&\gamma^{[\frac{2(p_1+1)}{p_1p_2-1}+1][\frac{2(p_1+1)}{p_1p_2+2p_1+1}]}
|\nabla u| ^{\frac{2(p_1+1)}{p_1p_2+2p_1+1}}, \nonumber\\
\label{f16} &=&\gamma ^{2\alpha}|\nabla u|^{\frac{2(p_1+1)}{p_1p_2+2p_1+1}}.\end{eqnarray}
Clearly, \begin{equation}\label{f114}
 u(x,t)+|\nabla u(x,t)|^{\frac{2(p_1+1)}{p_1p_2+2p_1+1}}\le M_u(t_0), \quad (x,t) \in \Omega\times(0,t^*].
  \end{equation}
From (\ref{f10}), (\ref{f16})  and (\ref{f114}),  we get (\ref{f15}).
 
 Moreover, \begin{equation}\label{} \varphi_2^{\gamma}+|\nabla\varphi_2^{\gamma}|^{\frac{2(p_2+1)}{p_1p_2+2p_2+1}}\le M_u^{-\frac{\beta}{\alpha}}(t_0)M_v(t_0),\end{equation}  for $(y,s) \in \Omega_\gamma \times (-\gamma^{-2}t^*,0].$
 
  On the other hand, from (\ref{f8}), we obtain 
\begin{equation}\label{f17}
\varphi_1^{\gamma}(0,0)+|\nabla \varphi_1^{\gamma}(0,0)|^{\frac{2(p_1+1)}{p_1p_2+2p_1+1}}\ge \frac{1}{2}. \end{equation}
If $M_v$ diverges as $t\rightarrow T$ we can proceed in the same way by changing the role of $u$ and $v.$ 

\hspace{-0.5 cm}{\bf Step 2: Schauder's estimates} 

We need interior Schauder's estimates of the functions $\varphi_1,\varphi_2$ on the sets
$$S_{K}=\{y \in \Omega_\gamma ,|y| \le K \} \times [-K,KL],\quad K>0,~L=0,1.$$ 
    Assume that $\varphi_1,\varphi_2$ satisfy in $S_{2K}$ the condition \begin{equation}\label{af}
0 \le \varphi_1^{\gamma}+|\nabla \varphi_1^{\gamma}|^{\frac{2(p_1+1)}{p_1p_2+2p_1+1}}\le B,\quad 0 \le \varphi_2^{\gamma}+|\nabla \varphi_2^{\gamma}|^{\frac{2(p_2+1)}{p_1p_2+2p_2+1}}\le B. \end{equation}
We claim that  for any $ K> 0,B>0$ and $\sigma >0$ small enough, there is a constant $C=C(K,B,\sigma)$ such that
\begin{equation}\label{f19}
||\varphi_1^{\gamma}||_{C^{2+\sigma,1+\frac{\sigma}{2}}(S_K)} \le C, \quad
||\varphi_2 ^{\gamma}||_{C^{2+\sigma,1+\frac{\sigma}{2}}(S_K)} \le C.
\end{equation}
From (\ref{af}) we deduce that each of $\varphi_1^\gamma,\varphi_2^\gamma,\nabla\varphi_1^\gamma,\nabla\varphi_2^\gamma$, is uniformly bounded function in $S_{2K}.$ Therefore,  the functions $(\varphi_1^\gamma)^{p_1},(\varphi_2^\gamma)^{p_2},|\nabla\varphi_1^\gamma|^{q_1},|\nabla\varphi_2^\gamma|^{q_2}$ are uniformly bounded in $S_{2K}.$ So, the right hand sides of the two equations in (\ref{f12}) are uniformly bounded functions in $S_{2K}$, applying the interior reqularity theory (see \cite{37}), we obtain (locally) uniform estimates in $C^{1+\sigma ,\frac{1+\sigma}{2}}$-norms. Consequently, by Lemma \ref{Bz}, we obtian (locally) uniform estimates in H\"{o}lder norms $C^{\sigma,\frac{\sigma}{2}}$ on the right hand sides of the both equations in (\ref{f12}).Thus the parabolic interior Schauder's estimates imply (\ref{f19}) (see \cite{23,37}).

 \hspace{-0.5 cm}{\bf Step 3: The proof of (\ref{f7})}
 
Suppose that this lower bound were false.Then there exist a sequence $t_j \rightarrow T$ such that
\begin{equation}\label{f20}
M_u^{-\frac{1}{2\alpha}}(t_j)M_v^{\frac{1}{2\beta}}(t_j) \longrightarrow 0,\quad \mbox{as} ~j\rightarrow \infty.
\end{equation}
Then clearly $M_u$ diverges as $t_j \rightarrow T$.
For each $t_j$  in the role of $t_0$ from {Step 1}, we scale about the correspoinding point $(x_j^*,t_j^*)$ for all $j$ such that $t_j^*\le t_j,$ with the scaling factor
$$\gamma_j=\gamma(t_j)=M_u^{-\frac{1}{2\alpha}}(t_j).$$
We obtain the corresponding rescaled solution $(\varphi_1^{\gamma_j},\varphi_2^{\gamma_j}),$
\begin{equation}
\varphi_1^{\gamma_j}(y,s)=\gamma_j^{2\alpha}u(\gamma_j y+x_j^*,\gamma_j^2s+t_j^*),
\end{equation}
\begin{equation}
\varphi_2^{\gamma_j}(y,s)=\gamma_j^{2\beta}v(\gamma_j y+x_j^*,\gamma_j^2s+t_j^*).
\end{equation}
Clearly, $(\varphi_1^{\gamma_j},\varphi_2^{\gamma_j})$ satisfies (as in {Step 1}) the following problem \begin{equation}\label{f90}\left.\begin{array}{ll}  \varphi_{1s}^{\gamma_j}-\Delta\varphi_1^{\gamma_j}=\gamma_j^{\mu_1}|\nabla \varphi_1^{\gamma_j}|^{q_1}+(\varphi_2^{\gamma_j })^{p_1},\\
 \varphi_{2s}^{\gamma_j}-\Delta\varphi_2^{\gamma_j}=\gamma_j^{\mu_2}|\nabla \varphi_2^{\gamma_j}|^{q_2}+(\varphi_1^{\gamma_j })^{p_2},\end{array}\right\}\end{equation}  with 
\begin{equation}\label{f81}\left. \begin{array}{ll} \varphi_1^{\gamma_j}(0,0)+|\nabla \varphi_1^{\gamma_j}(0,0)|^{\frac{2(p_1+1)}{p_1p_2+2p_1+1}}\ge 1/2,\\
0 \le \varphi_1^{\gamma_j}+|\nabla \varphi_1^{\gamma_j}|^{\frac{2(p_1+1)}{p_1p_2+2p_1+1}}\le 1, \\
  \varphi_2^{\gamma_j}+|\nabla \varphi_2^{\gamma_j}|^{\frac{2(p_2+1)}{p_1p_2+2p_2+1}}\ \le M_u^{-\frac{\beta}{\alpha}}(t_j)M_v(t_j), \end{array}\right\}\end{equation}
for $(y,s) \in \Omega_{\gamma_j} \times ( -\gamma_j^{-2}t_j^*,0],$ where 
$$\Omega_{\gamma_j} := \left\{ \begin{array}{lll} R^n &\quad \mbox{if}&  \Omega=R^n,\\ B_{\frac{R}{\gamma_j}} &\quad \mbox{if}& \Omega=B_R. \end{array} \right.$$
Clearly, $$\Omega_{\gamma_j}\longrightarrow R^n,\quad\mbox{as}~ j\rightarrow \infty.$$
From (\ref{f20}), (\ref{f81}), we see that $$ \varphi_2^{\gamma_j}+|\nabla \varphi_2^{\gamma_j}|^{\frac{2(p_2+1)}{p_1p_2+2p_2+1}} \longrightarrow 0,\quad \mbox{as} ~j\rightarrow \infty.$$ Thus $\varphi_2^{\gamma_j},\nabla \varphi_2^{\gamma_j}$ are bounded in $\Omega_{\gamma_j} \times ( -\gamma_j^{-2}t_j^*,0], ~\forall j.$ 

Using the uniform Schauder's estimate derived in {Step 2} to $(\varphi_1^{\gamma_j},\varphi_2^{\gamma_j})$
$$||\varphi_1^{\gamma_j}||_{C^{2+\sigma,1+\frac{\sigma}{2}}( \{y \in \Omega_{\gamma_j} ,|y|\le K \} \times [ -K,0])}\le C_K,$$
$$|| \varphi_2 ^{\gamma_j}||_{C^{2+\sigma,1+\frac{\sigma}{2}}( \{y \in \Omega_{\gamma_j},|y|\le K\} \times [ -K,0])}\le C_K,$$ where $C_K$ is independent of $j .$  

Since $(\varphi_1^{\gamma_j},\varphi_2^{\gamma_j})$ is defined on a compact set, by the Arzela-Ascoli theorem, there exist a convergent subsequance, we still denote it by  $(\varphi_1^{\gamma_j},\varphi_2^{\gamma_j}).$ 

Since $\mu_1,\mu_2 >0$ and $\nabla \varphi_1^{\gamma_j},\nabla \varphi_2^{\gamma_j}$ are bounded, it follows that, the limit point  $(\varphi _1,\varphi_2)$ is a solution  of the following system
 \begin{equation}\label{f22}
\varphi_{1s}=\Delta\varphi_1+\varphi_2^{p_1},\quad
 \varphi_{2s}=\Delta\varphi_2+\varphi_1^{p_2},\quad \mbox{in}\quad R^n \times (-\infty ,0].
 \end{equation}

Since $\varphi_2^{\gamma_j} \rightarrow 0,~ \mbox{as}~ j \rightarrow \infty,$ it follows that $\varphi_2\equiv 0,~\mbox{in}~R^n \times (-\infty ,0].$ 

Consequently, from the second equation in (\ref{f22}), we obtain that  $$\varphi_1\equiv 0,\quad \mbox{in}\quad R^n \times (-\infty ,0].$$

This means $$\varphi_1(0,0)+|\nabla\varphi_1(0,0)|^{\frac{2(p_1+1)}{p_1p_2+2p_1+1}}=0,$$ 
which contradicts with (\ref{f81}). Thus, the lower bound is held.

To prove the upper bound of (\ref{f7}) we proceed similarly as in the proof of lower bound with changing the role of $u$ and $v$. 

\hspace{-0.5 cm}{\bf Step 4: Estimate on doubling of  $M_u$}

 As $M_u$ is continuous and diverges as $t \rightarrow T,$ for any $t_0 \in (0,T)$ we define $t_0^+$ by
$$t_0^+=\max\{ t \in (t_0,T) : M_u(t)=2M_u(t_0)\}.$$ 
Clearly,
\begin{equation}\label{f14}
 u(x,t)+|\nabla u(x,t)|^{\frac{2(p_1+1)}{p_1p_2+2p_1+1}}\le 2M_u(t_0), \quad (x,t) \in \Omega\times(0,t_0^+].
  \end{equation}
  Take $\gamma =\gamma(t_0)=M_u^{-\frac{1}{2\alpha}}(t_0).$ 

We claim that
$$\gamma^{-2}(t_0)(t_0^+-t_0) \le A, \quad t_0 \in (\frac{T}{2},T),$$
where the constant $A \in (0,\infty)$ is independent of $t_0.$
Suppose that this estimate were false, then there would exist a sequence $t_j \rightarrow T$ such that
$$\gamma_j^{-2}(t_j)(t_j^+-t_j) \rightarrow \infty,$$
where \begin{equation}\label{raf} t_j^+=\max\{ t \in (t_j,T): M_u(t)=2M_u(t_j)\}.\end{equation}
For each $t_j$ we scale about the corresponding point $(x_j^*,t_j^*)$ such that $$0<t_j^*\le t_j,\quad\frac{T}{2}< t_j< t_j^+<T,\quad \forall j$$  with the scaling factor $$\gamma_j =\gamma (t_j)=M_u^{-\frac{1}{2\alpha}}(t_j).$$
As in {Step 3}, we obtain the corresponding rescaled functions  $(\varphi_1^{\gamma_j},\varphi_2^{\gamma_j}),$ which satisfies  (\ref{f90}) with the following conditions

 \begin{equation}\label{xza}\left. \begin{array}{ll} \varphi_1^{\gamma_j}(0,0)+|\nabla \varphi_1^{\gamma_j}(0,0)|^{\frac{2(p_1+1)}{p_1p_2+2p_1+1}}\ge 1/2,\\
0 \le \varphi_1^{\gamma_j}+|\nabla \varphi_1^{\gamma_j}|^{\frac{2(p_1+1)}{p_1p_2+2p_1+1}}\le 2, \\
  \varphi_2^{\gamma_j}+|\nabla \varphi_2^{\gamma_j}|^{\frac{2(p_2+1)}{p_1p_2+2p_2+1}}\ \le M_u^{-\frac{\beta}{\alpha}}(t_j)M_v(t_j^+), \end{array}\right\}\end{equation}
   for $(y,s)\in\Omega_{\gamma_j}\times(-\gamma_j^{-2}t^*,\gamma_j^{-2}(t_j^+-t_j^*)].$ 
 
 From (\ref{raf}) and (\ref{xza}), it follows that \begin{equation}\label{f26} \varphi_2^{\gamma_j}+|\nabla\varphi_2^{\gamma_j}|^{\frac{2(p_2+1)}{p_1p_2+2p_2+1}}\ \le 2^{\frac{\beta}{\alpha}}M_u^{-\frac{\beta}{\alpha}}(t_j^+)M_v(t_j^+).\end{equation}  From (\ref{f7}), we have
$$M_v(t) \le  \delta ^{-2\beta} M_u^{\frac{\beta}{\alpha}}(t), \quad t\in (\frac{T}{2},T).$$
Therefore, (\ref{f26}) becomes
$$\varphi_2^{\gamma_j}+|\nabla\varphi_2^{\gamma_j}|^{\frac{2(p_2+1)}{p_1p_2+2p_2+2}}\ \le \frac{2^{\frac{\beta}{ \alpha}}}{\delta^{2\beta}}.$$
By using the Schauder's estimates derived in {Step 2} for $(\varphi_1^{\gamma_j},\varphi_2^{\gamma_j})$ we get a convergent subseguence in $C_{loc}^{2+\sigma,1+\sigma /2}(R^n \times R)$ to the solution of system (\ref{f22}) in $R^n \times R.$ This is a contradiction because all the nontrival positive solutions of system (\ref{f22}), under the assumption (1), blow up in finite time (see \cite{36}).

Thus, there is $A>0$ such that \begin{equation}\label{f23} \gamma^{-2}(t_0)(t_0^+-t_0) \le A, \quad t_0\in(\frac{T}{2},T).\end{equation}
\hspace{0 cm}{\bf Step 5: Rate estimates }

As in {Step 4}, for any $t_0 \in(T/2,T)$ we define
$$t_1=t_0^+ \in(t_0,T)\quad  \mbox{such that}\quad  M_u(t_1)=2M_u(t_0).$$
Due to (\ref{f23}), $$(t_1-t_0)\le A M_u^{-\frac{1}{\alpha}}(t_0).$$ We can use $t_1$ as a new $t_0$ and obtain $t_2 \in (t,T)$ such that $$M_u(t_2)=2M_u(t_1)=4M_u(t_0),$$
$$(t_2-t_1)\le A M_u^{-\frac{1}{\alpha}}(t_1)=2^{-\frac{1}{\alpha}}A M_u^{-\frac{1}{\alpha}}(t_0).$$
Continuing this process we obtain a sequence $t_j \rightarrow T$ such that
$$(t_{j+1}-t_j)\le 2^{-\frac{j}{\alpha}}A M_u^{-\frac{1}{\alpha}}(t_0),\quad j=0,1,2,\ldots$$
If we add these inequalities we get
$$(T-t_0)\le \sum_{j\ge 0} 2^{-\frac{j}{\alpha}}AM_u^{-\frac{1}{\alpha}}(t_0).$$
Thus $$(T-t_0)\le (1-2^{-\frac{1}{\alpha}})^{-1}AM_u^{-\frac{1}{\alpha}}(t_0)$$
From using (\ref{f7}) we obtain
$$M_v(t_0)\le \delta^{-2\beta}M_u^{\frac{\beta}{\alpha}}(t_0), ~t_0 \in (T/2,T).$$
Thus $$M_v(t_0)\le \delta^{-2\beta}(1-2^{-\frac{1}{\alpha}})^{-\beta}A^\beta (T-t_0)^{-\beta},\quad t_0 \in (T/2,T).$$
From above there exist two constants $C_1^*,C_2^*$ such that
$$M_u(t_0)\le C_1^*(T-t_0)^{-\alpha},\quad t_0\in (\frac{T}{2},T),$$
$$M_v(t_0)\le C_2^*(T-t_0)^{-\beta},\quad t_0\in (\frac{T}{2},T).$$
From the last two equations and the definitions of $M_u,M_v,$ it follows that there exist constants $C_1,C_2$ such that
$$u(x,t)+|\nabla u(x,t)| ^{\frac{2(p_1+1)}{p_1p_2+2p_1+1}}\le C_1(T-t)^{-\alpha},$$
 $$v(x,t)+|\nabla v(x,t)|^{\frac{2(p_2+1)}{p_1p_2+2p_2+1}}\le C_2(T-t)^{-\beta},$$
for $(x,t) \in \Omega \times (0,T).$
\end{proof}
%\begin{remark}
\begin{remark}
If $u_0\equiv v_0,$ $p=p_1=p_2,$ $q=q_1=q_2,$ then problem (\ref{f1}) can be reduced to a scalar Dirichlet (Cauchy) problem for (\ref{f4}). Moreover, if 
\begin{equation}\label{Abbas} 1< p\le 1+\frac{2}{n},\quad  1<q<\frac{2p}{1+p},\end{equation} 
then in a similar way to the proof of Theorem \ref{f}, we can show that, for a nontrivial positive blow-up solution $u,$ there exist $C>0$ such that 
\begin{equation}\label{Fri} u(x,t)+|\nabla u(x,t)|^{\frac{2}{p+1}}\le C(T-t)^{\frac{1}{p-1}},\quad \mbox{in}\quad \Omega\times(0,T),\end{equation}
i.e.  \begin{equation}\label{hoda}u(x,t)\le C(T-t)^{\frac{1}{p-1}},\quad \mbox{in}\quad \Omega\times(0,T).\end{equation}
%A similar estimate to (\ref{Fri}) has been shown in \cite{Liou,2} to a large class of semilinear heat equations with gradient terms including (\ref{z1}) and (\ref{f4}). 
As we have mentioned before, the rate estimate (\ref{hoda}) is also known for the blow-up solutions of equations  (\ref{f3}). Therefore, if $p,q$ satisfy (\ref{Abbas}), then the positive gradient terms which appear in equation (\ref{f4}), does not affect the blow-up rate estimates of these problems.  A similar observation holds for  problem (\ref{f1}) by 
Theorem \ref{f},  which shows that the upper rate estimates of the Cauchy or Dirichlet problem for system (\ref{f1}) are the same as those known for the system (\ref{f5}). Therefore, under the assumptions of Theorem \ref{f}, the gradient terms in system (\ref{f1}) have no effect on the blow-up rate estimates. %, moreover, these gradient terms are bounded for any $t<T.$ 
\end{remark}
\subsection{Blow-up Set}
%\begin{remark}\label{BVn} 
It is well known that for the semilinear system (\ref{f5}) defined in a ball, under some restricted assumptions on $u_0,v_0$ (nonnegative, radially decreasing functions), that the only blow-up point is the centre of that ball (see \cite{10}), while it is unknown whether this holds for the system (\ref{f1}). However, for the radial solutions of the single equation (\ref{f4}) defined in $\Omega,$ in case $q=2,$ there is global blow-up, if $1<p<2,$ $\Omega=B_R$ or $R^n,$ and regional blow-up, if $p=2,$ $\Omega=R^n,$ while a single blow-up point, if $p>2,$ $\Omega=B_R$ (see \cite{2,26} and the references therein).The proof relies on the transformation $v=e^u-1,$ which converts (\ref{f4}) into the semilinear heat equation $v_t=\Delta v+(1+v)\log^p(1+v).$
We note that, these results are much different from those known for equation (\ref{f3}) (see \cite{2}), because for any $p>1,$ $\Omega=B_R$ or $R^n,$ only a single blow-up point is known to occur for that problem, where the initial date are nonnegative, radially nonincreasing and bounded function.

\end{document}